\documentclass[11pt,letterpaper]{amsart}

\ifdefined\pdfmapline
  \pdfmapline{=cmex7 TeX-cmex7 <cmex7.pfb}
  \pdfmapline{=cmex8 TeX-cmex8 <cmex8.pfb}
  \pdfmapline{=cmex9 TeX-cmex9 <cmex9.pfb}
\fi
\usepackage{amsmath,amssymb,amsfonts,mathtools}
\usepackage{xcolor}
\usepackage{enumitem}
\usepackage{hyperref}
\usepackage{cleveref}
\usepackage{mathscinet}

\calclayout
\DeclareFontFamily{U}{matha}{\hyphenchar\font45}
\DeclareFontShape{U}{matha}{m}{n}{
      <5> <6> <7> <8> <9> <10> gen * matha
      <10.95> matha10 <12> <14.4> <17.28> <20.74> <24.88> matha12
      }{}
\DeclareSymbolFont{matha}{U}{matha}{m}{n}
\DeclareFontSubstitution{U}{matha}{m}{n}
\DeclareFontFamily{U}{mathx}{\hyphenchar\font45}
\DeclareFontShape{U}{mathx}{m}{n}{
      <5> <6> <7> <8> <9> <10>
      <10.95> <12> <14.4> <17.28> <20.74> <24.88>
      mathx10
      }{}
\DeclareSymbolFont{mathx}{U}{mathx}{m}{n}
\DeclareFontSubstitution{U}{mathx}{m}{n}
\DeclareMathDelimiter{\vvvert} {0}{matha}{"7E}{mathx}{"17}%

\makeatletter
\def\letterdef#1#2#3{\def\letterdef@##1{\expandafter\def\csname #1\endcsname{#2}}%
  \letterdef@@#3{?\@car{}}\@nil}
\def\letterdef@@#1{\@gobble#1\letterdef@{#1}\letterdef@@}
\makeatother
\DeclarePairedDelimiterX{\klx}[2]{(}{)}{%
  #1\;\delimsize\|\;#2%
}
\DeclarePairedDelimiterX{\quantklx}[3]{(}{)}{%
  #1\;\delimsize\|\;#2\;\delimsize\vert\;#3%
}
\DeclarePairedDelimiterX{\inner}[2]{\langle}{\rangle}{%
  #1,#2%
}
\newcommand{\R}{\mathbf R} 
\letterdef{c#1}{\mathcal{#1}}{ABCDEFGHIJKLMNOPQRSTUVWXYZ}
\letterdef{b#1}{\mathbb{#1}}{ABCDEFGHIJKLMNOPQRSTUVWXYZ}
\letterdef{bf#1}{\mathbf{#1}}{ABCDEFGHIJKLMNOPQRSTUVWXYZ}

\newcommand{\ud}[0]{\,\mathrm{d}}  
\newcommand{\1}{\mathbf 1} 
\let\ones\1
\newcommand{\half}{\frac12} 
\let\epsilon\varepsilon
\newcommand{\eps}{\varepsilon}

\let\preceq\preccurlyeq

\renewcommand{\leq}{\leqslant}
\renewcommand{\geq}{\geqslant}
\newcommand{\argmin}{\mathop{\rm arg\,min}}

\DeclareMathOperator{\diam}{diam}

\newcommand{\kl}{\mathrm{KL}\klx}

\makeatletter
\newcommand{\E}{\operatorname*{\mathbf{E}}\ilimits@}
\makeatother
\makeatletter
\renewcommand{\P}{\operatorname*{\mathbf{P}}\ilimits@}
\makeatother

\newcommand{\ie}{\textit{i}.\textit{e}., }
\newcommand{\eg}{\textit{e}.\textit{g}., }

\newcounter{algorithmctr}
\renewcommand{\thealgorithmctr}{\arabic{algorithmctr}}
   {\refstepcounter{algorithmctr}\begin{list}{}{%
       \setlength{\rightmargin}{0\linewidth}%
       \setlength{\leftmargin}{0\linewidth}}%
       \rmfamily\small
       \item[]{\setlength{\parskip}{0ex}\hrulefill\par%
        \nopagebreak{\bfseries\textsf{Algorithm \thealgorithmctr~}}}}%
   {{\setlength{\parskip}{-1ex}\nopagebreak\par\hrulefill} \end{list}}

\makeatletter
\long\def\@makecaption#1#2{
        \vskip 0.8ex
        \setbox\@tempboxa\hbox{\small {\bf #1.} #2}
        \parindent 1.5em 
        \dimen0=\hsize
        \advance\dimen0 by -3em
        \ifdim \wd\@tempboxa >\dimen0
                \hbox to \hsize{
                        \parindent 0em
                        \hfil 
                        \parbox{\dimen0}{\def\baselinestretch{0.96}\small
                                {\bf #1.} #2
                                } 
                        \hfil}
        \else \hbox to \hsize{\hfil \box\@tempboxa \hfil}
        \fi
        }
\makeatother

\theoremstyle{plain}
\newtheorem{theorem}{Theorem}
\newtheorem{proposition}[theorem]{Proposition}
\newtheorem{lemma}[theorem]{Lemma}

\theoremstyle{definition}

\newtheoremstyle{boldromanremark}
  {\dimexpr\topsep+0.4\baselineskip\relax}
  {\dimexpr\topsep+0.4\baselineskip\relax}
  {\normalfont}{}
  {\bfseries}
  {.}{.5em}
  {}
\theoremstyle{boldromanremark}
\newtheorem{remark}{Remark}

\makeatletter
\renewenvironment{proof}[1][\proofname]{\par
  \pushQED{\qed}%
  \normalfont \topsep6\p@\@plus6\p@\relax
  \trivlist
  \item[\hskip\labelsep
        \bfseries\itshape
    #1\@addpunct{.}]\ignorespaces
}{%
  \popQED\endtrivlist\@endpefalse
}
\makeatother

\newcommand{\klconst}{C_{\rm KL}}
\newcommand{\BayesOpt}[2]{\mathfrak{B}_{#1}{(#2)}}
\newcommand{\AllMeasures}[1]{\cP(#1)}
\newcommand{\LiuFunctional}[1]{\mathfrak{L}(#1)}
\newcommand{\Normal}[2]{N(#1,#2)}
\title[Remark on Majorizing Measures]{A remark on the \\ majorizing measures theorem for general processes}

\author{Reese Pathak}
\thanks{RP gratefully acknowledges support from the NSF under grant DMS-2503579.}
\address{Department of Statistics, UC Berkeley,\and\newline\hspace*{\parindent}%
School of Oper. Res. \& Inf. Engineering (ORIE), Cornell University}
\email{pathakr@berkeley.edu}

\author{Nikita Zhivotovskiy}
\address{Department of Statistics, 
UC Berkeley}
\email{zhivotovskiy@berkeley.edu}

\date{\today}

\begin{document}

\begin{abstract}
We show that the lower bound in the majorizing measures theorem holds for a large class of random vectors. Specifically, suppose $X \sim \mu$ is a centered random vector in $\R^n$ with
\[
\klconst(\mu) = \sup_{\substack{\theta, \eta \in \R^n \\ \theta \neq \eta}} 
\frac{\kl{\mu_\theta}{\mu_\eta}}{\|\theta - \eta\|_2^2} < \infty,
\]
where $\mu_\theta$ denotes the law of the translate $\theta + X$.
Then, for every nonempty, bounded $T \subset \R^n$, 
\[
\sqrt{\klconst(\mu)}\, \E_\mu \Big[\sup_{t \in T} \, \langle X, t \rangle \Big] 
\gtrsim \gamma_2(T), 
\]
where the righthand side denotes Talagrand's generic chaining functional. 
This result recovers, as a special case, the lower bound in the majorizing measures theorem for centered Gaussian 
processes.  Our argument  critically relies on  the rate-distortion integral, recently  introduced 
by J.\ Liu~\cite{Liu26}.
\end{abstract}

\maketitle

\section{Introduction}

A central result in the theory of Gaussian processes is 
the Fernique-Talagrand majorizing measures theorem~\cite{Fer75,Tal87}. 
For a metric space $(T, d)$, 
Talagrand's generic chaining functional is,
\begin{equation}
\label{eqn:generic-chaining-functional}
\gamma_2(T, d) = \inf \sup_{t \in T} \sum_{n=0}^\infty 2^{n/2} \diam(A_n(t)).
\end{equation}
The infimum ranges over \emph{admissible sequences},
\ie increasing partitions $\{\cA_n\}_{n \geq 0}$ of $T$ 
such that $|\cA_0| = 1$ and $|\cA_n| \leq 2^{2^n}$ for $n \geq 1$.
Above, $A_n(t)$ denotes the element of the 
partition $\cA_n$ which contains $t$.
For a set $T \subset \R^n$ equipped with the Euclidean metric, 
\ie $d(t,s) = \|t - s\|_2$, 
we denote the generic chaining functional by $\gamma_2(T)$.

A modern formulation of the majorizing measures theorem (\eg~\cite[Theorem~2.10.1]{Tal21}) states that 
for a centered Gaussian process $\{G_t\}_{t \in T}$ with 
$d(t, s) = (\E (G_t - G_s)^2)^{1/2}$, it holds that
\begin{equation}
\label{eqn:majorizing-measures-theorem}
\frac{1}{C} \, \gamma_2(T, d) \leq \E \sup_{t \in T} G_t \leq C \, \gamma_2(T, d),
\end{equation}
where $C > 0$ is a universal constant. 
We refer readers to~\cite{Tal21} for context and history of this result.

The main result of this note is that the lower bound in the majorizing 
measures theorem holds for random vectors $X \in \R^n$, which 
satisfy a quadratic growth condition on the Kullback-Leibler (KL) divergence\footnote{For two probability measures $P, Q$, is defined by $\kl{P}{Q} = \int \log \tfrac{\ud P}{\ud Q}(x) \, \ud P(x)$ if $P \ll Q$ and $+\infty$ otherwise.}
between the law of translates of $X$. We defer discussion of the result to~\Cref{sec:remarks-on-main-result}.
\begin{theorem}
\label{thm:quadratic-kl-to-gamma-2}
  Suppose that $X \sim \mu$ is a centered random vector in $\R^n$ such that
  \begin{equation}
  \label{ineq:quadratic-KL-assumption}
  \klconst(\mu) = \sup_{\substack{\theta, \eta \in \R^n \\ \theta \neq \eta}} 
\frac{\kl{\mu_\theta}{\mu_\eta}}{\|\theta - \eta\|_2^2} < \infty,
  \end{equation}
  where $\mu_\theta$ denotes the law of the translate $\theta + X$.
  Then, for any nonempty, bounded $T \subset \R^n$,
  \[
  \E_\mu \Big[\sup_{t \in T} \, \langle X, t \rangle \Big] \gtrsim \frac{1}{\sqrt{\klconst(\mu)}} \, \gamma_2(T).
  \]
\end{theorem}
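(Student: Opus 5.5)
Since $\mu$ is arbitrary, there is no Gaussian comparison or Sudakov minoration to fall back on. The plan is instead to go through J.\ Liu's rate-distortion integral~\cite{Liu26}, a functional $\LiuFunctional{T}$ built from Bayes-type estimation problems over priors on $T$, which Liu shows is equivalent to $\gamma_2(T)$ up to universal constants. The argument splits into two inequalities:
\[
\gamma_2(T)\lesssim \LiuFunctional{T}
\qquad\text{and}\qquad
\LiuFunctional{T}\lesssim \sqrt{\klconst(\mu)}\,\E_\mu\sup_{t\in T}\langle X,t\rangle .
\]
The first I will quote from~\cite{Liu26}. It is the deep combinatorial step, and it is exactly where the partitioning/majorizing-measure construction is hidden. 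The content of this note is therefore the second inequality, which should be a purely information-theoretic statement about the location family $\{\mu_\theta\}$.

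For the second inequality I will use the standard structure of such functionals. For a prior $\pi\in\AllMeasures{T}$, consider the observation $Y=\lambda\theta+X$ with $\theta\sim\pi$. The rate-distortion/Bayes quantity at signal level $\lambda$ is controlled by the mutual information $I(\theta;Y)$. By convexity of KL, and since $\kl{\mu_{\lambda\theta}}{\mu_{\lambda\eta}}\le \klconst\lambda^2\|\theta-\eta\|_2^2$, we get
\[
I(\theta;Y)\le \E_{\theta,\eta\sim\pi}\kl{\mu_{\lambda\theta}}{\mu_{\lambda\eta}}\le \klconst\lambda^2\E\|\theta-\eta\|_2^2 .
\]
This is where the Gaussian case, with $\klconst=1/2$, is recovered.

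The dual side is a Donsker--Varadhan / Gibbs variational argument. For any $\lambda>0$ and prior $\pi$,
\[
\E_\mu\sup_{t}\langle X,t\rangle\ \ge\ \frac1\lambda\,\E\log\int e^{\lambda\langle X,t\rangle-\lambda^2\klconst\|t\|^2}\,d\pi(t)+(\text{correction}),
\]
and the log-partition function is exactly $I(\theta;Y)$-type information that the rate-distortion integral sums across scales. To make this precise, I will write $\E\sup_t\langle X,t\rangle\ge \E\langle X,\hat t(X)\rangle$ for a Gibbs-posterior choice $\hat t$. Integrating over $\lambda$, or summing dyadic scales $\lambda=2^{k/2}$, then matches Liu's integral with the factor $\sqrt{\klconst}$ from rescaling $\lambda\mapsto\lambda\sqrt{\klconst}$.

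Centering is used to replace $\sup_t\langle X,t\rangle$ by $\sup_t\langle X,t-t_0\rangle$, so that $\E\langle X,t_0\rangle=0$. The data-processing inequality transfers bounds from the full observation to the estimator.

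The main obstacle is the matching step. We must show that the specific quantity appearing in Liu's functional, presumably an integral over distortion levels of the minimal rate $\inf\{I(\theta;\hat\theta):\E\|\theta-\hat\theta\|^2\le D\}$ maximized over priors, is dominated by what the location family can "transmit" at noise level $\lambda^{-1}$. This requires relating the distortion achievable from $Y$ to $\E\sup\langle X,t\rangle$. My first attempt is an I-MMSE-style identity, replaced by an inequality since only the KL upper bound is available: differentiate $\lambda\mapsto \E\log\int e^{\lambda\langle X,t\rangle}d\pi$ and compare with the mutual information bound above. That comparison is where $\sqrt{\klconst}$ enters.
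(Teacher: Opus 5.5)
Your outer decomposition matches the paper's: quote $\gamma_2(T)\lesssim\LiuFunctional{T}$ from~\cite{Liu26}, then prove $\LiuFunctional{T}\lesssim\sqrt{\klconst(\mu)}\,w_\mu(T)$, where $w_\mu(T)=\E_\mu\sup_{t\in T}\langle X,t\rangle$. You also use centering the same way. But that second inequality is the entire content, and the route you sketch does not reach it.

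First, a bound $w_\mu(T)\geq f(\lambda)$ valid for each fixed $\lambda$ yields only $w_\mu(T)\geq\sup_\lambda f(\lambda)$. Integrating or summing such bounds over dyadic $\lambda$ is not legitimate. What you need is a decomposition of the width itself over all noise levels, for a \emph{single} prior, expressed through the location family $\{\mu_\theta\}$ where the hypothesis lives. The paper gets this from the projection identity of~\cite[Theorem~2.2]{PatZhi26}, which uses no Gaussian structure: for closed convex $T$ and $\theta\in T$, $2w_\mu(T-\theta)=\int_0^\infty\sigma^{-2}\E\|\Pi_T(\theta+\sigma X)-\theta\|_2^2\,\ud\sigma$. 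Averaging over $\theta\sim\pi$ (centering gives $w_\mu(T-\theta)=w_\mu(T)$), and noting that $\Pi_T$ is just one estimator, gives $w_\mu(T)\geq\half\int_0^\infty\varepsilon_\mu(\sigma,\pi)^2\sigma^{-2}\,\ud\sigma$.

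Second, your Gibbs measure $\propto e^{\lambda\langle x,t\rangle-\lambda^2\klconst\|t\|_2^2}\,\ud\pi(t)$ is the posterior of the location model only because the Gaussian likelihood ratio is an exponential tilt in $\langle x,t\rangle$. The same fact is what ties its log-partition function to $I(\theta;Y)$. For general $\mu$, $\ud\mu_{\lambda t}/\ud\mu$ has no such form; the KL hypothesis controls its average, not its shape. I-MMSE is also unavailable, since $\sigma\mapsto I(\theta;\theta+\sigma X)$ need not be differentiable. The hypothesis must enter essentially: for Rademacher $X$ (where $\klconst(\mu)=\infty$) and $T$ the unit $\ell_1$-ball, $w_\mu(T)=1$ while $\gamma_2(T)\asymp\sqrt{\log n}$. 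In your sketch it appears only in a mutual-information bound that is never connected to $w_\mu(T)$.

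Moreover, Liu's functional involves no noise model at all. It is $\sup_\pi\int\sqrt{i_\pi(r)}\,\ud r$ with $i_\pi$ the \emph{self-coupling} rate-distortion function, so its link to the location family is exactly what has to be built. Your bound $I(\theta;Y)\leq\klconst\lambda^2\E\|\theta-\eta\|_2^2$, with $\eta$ independent of $\theta$, uses the trivial coupling and sees only the global scale. The missing idea is a derivative-free sandwich: with $Y_\sigma=\theta+\sigma X$,
\[
i_\pi\big(\sqrt2\,\varepsilon_\mu(\sigma,\pi)\big)\leq I(\theta;Y_\sigma)\leq i_\pi(2r)+4\klconst(\mu)\frac{r^2}{\sigma^2}\qquad\text{for all } r,\sigma>0.
\]
The upper bound takes a near-optimal self-coupling $(\theta,\theta')$ at distortion $2r$ and uses the chain rule $I(\theta;Y_\sigma)\leq I(\theta;\theta')+I(\theta;Y_\sigma\mid\theta')$. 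The last term is bounded by $\E\,\kl{\cL(\theta+\sigma X)}{\cL(\theta'+\sigma X)}\leq\klconst(\mu)\E\|\theta-\theta'\|_2^2/\sigma^2$. This is your convexity bound, but applied conditionally on a \emph{correlated} $\theta'$. The lower bound resamples $\widetilde\theta\sim\cL(\theta\mid\hat\theta)$ with $\hat\theta=\E[\theta\mid Y_\sigma]$. This yields a self-coupling with $\E\|\theta-\widetilde\theta\|_2^2\leq2\varepsilon_\mu(\sigma,\pi)^2$, and data processing finishes.

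The sandwich forces $\sqrt2\,\varepsilon_\mu(\sigma,\pi)\geq r$ once $\sigma>2\sqrt{\klconst(\mu)}\,r/\sqrt{i_\pi(r)-i_\pi(2r)}$. A layer-cake computation then gives $\int_0^\infty\varepsilon_\mu(\sigma,\pi)^2\sigma^{-2}\,\ud\sigma\geq(2\sqrt{\klconst(\mu)})^{-1}\int_0^\infty\sqrt{i_\pi(r)-i_\pi(2r)}\,\ud r$. Telescoping $i_\pi(r)=\sum_{k\geq0}\big(i_\pi(2^kr)-i_\pi(2^{k+1}r)\big)$ with subadditivity of $\sqrt{\cdot}$ bounds $\int_0^\infty\sqrt{i_\pi(r)}\,\ud r$ by twice the latter integral. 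None of this is in your proposal, and you flag the matching step as open yourself, so as written the argument does not prove the theorem.
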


\section{\texorpdfstring{Proof of~\Cref{thm:quadratic-kl-to-gamma-2}}{Proof of the main result}}

We assume, without loss of generality, that $T \subset \R^n$ is closed, convex and $0 \in T$.
We define
\[
w_\mu(T) = \E_\mu \Big[\sup_{t \in T} \, \langle X, t \rangle \Big].
\]

\subsection{Step 1: Reduction to priors}

The first step of our argument shows that the $\mu$-width $w_\mu(T)$ can be bounded from below in terms of a variational problem involving measures.

Let $\AllMeasures{T}$ denote all probability measures supported on $T$.
For $\pi \in \AllMeasures{T}, \sigma > 0$, define 
\begin{equation}
\label{eqn:bayes-risk}
\varepsilon_\mu(\sigma, \pi) = 
\inf_{\hat \theta} \Big(\E_{\theta \sim \pi} 
\E_{X \sim \mu} \Big[\|\hat \theta(\theta + \sigma X) - \theta\|_2^2\Big]\Big)^{1/2}.
\end{equation}
The infimum ranges over measurable mappings 
$\hat \theta \colon \R^n \to \R^n$ and is attained with the posterior mean, $\hat \theta(Y_\sigma) = \E[\theta \mid Y_\sigma]$ where 
$Y_\sigma = \theta + \sigma X$.
We then define the quantity 
\[
\BayesOpt{\mu}{T} =  
\sup_{\pi \in \AllMeasures{T}} 
\int_0^\infty \frac{\varepsilon_\mu(\sigma, \pi)^2}{\sigma^2} \, \ud \sigma.
\]
\begin{lemma}
\label{lem:Bayes-lower}
    It holds that $w_\mu(T) \geq \half \, \BayesOpt{\mu}{T}$.
\end{lemma}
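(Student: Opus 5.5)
The plan is to fix a prior $\pi \in \AllMeasures{T}$ and upper bound $\varepsilon_\mu(\sigma,\pi)^2$ by the risk of one explicit estimator, namely the Euclidean projection onto $T$. Then I will show that the integral of that risk against $\ud\sigma/\sigma^2$ telescopes into $2 w_\mu(T)$. Taking the supremum over $\pi$ then gives the lemma. Since $T$ is closed and convex, the projection $\Pi_T$ is well defined and measurable. For $\theta \in T$ and $Y_\sigma = \theta + \sigma X$ we have
\[
\Pi_T(Y_\sigma) = \argmin_{t\in T}\tfrac12\|Y_\sigma - t\|_2^2 = \argmax_{t \in T}\Big(\langle X, t\rangle - \tfrac{\lambda}{2}\|t-\theta\|_2^2\Big), \qquad \lambda = 1/\sigma,
\]
after expanding the square and discarding terms independent of $t$. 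Hence $\varepsilon_\mu(\sigma,\pi)^2 \le \E_{\theta\sim\pi}\E_\mu \|\hat t_\lambda - \theta\|_2^2$, where $\hat t_\lambda = \hat t_\lambda(\theta, X)$ denotes this maximizer.

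Next, for fixed $\theta$ I will introduce the function
\[
F_\theta(\lambda) = \E_\mu \sup_{t\in T}\Big(\langle X,t\rangle - \tfrac{\lambda}{2}\|t-\theta\|_2^2\Big), \qquad \lambda \ge 0.
\]
For each realization of $X$, the inner supremum is a supremum of affine functions of $\lambda$, so it is convex and nonincreasing. By the envelope theorem (Danskin), its derivative at almost every $\lambda$ equals $-\tfrac12\|\hat t_\lambda-\theta\|_2^2$; the maximizer is unique by strict concavity in $t$. Since $T$ is bounded, these derivatives are uniformly bounded by $\tfrac12\diam(T)^2$. The integrand is also dominated by $\|X\|_2\sup_{t\in T}\|t\|_2$, which is integrable because $X$ has a finite first moment (implicit in its being centered). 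Therefore I may differentiate under the expectation and obtain $F_\theta'(\lambda) = -\tfrac12 \E_\mu\|\hat t_\lambda-\theta\|_2^2$.

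Changing variables $\lambda = 1/\sigma$, so that $\ud\sigma/\sigma^2 = \ud\lambda$, gives
\[
\int_0^\infty \frac{\varepsilon_\mu(\sigma,\pi)^2}{\sigma^2}\ud\sigma \le \E_{\theta\sim\pi}\int_0^\infty \E_\mu\|\hat t_\lambda - \theta\|_2^2 \ud\lambda = 2\,\E_{\theta\sim\pi}\big(F_\theta(0) - F_\theta(\infty)\big),
\]
with Tonelli justifying the exchange, since the integrand is nonnegative. The two endpoint values remain to be identified. First, $F_\theta(0) = w_\mu(T)$. Second, $\lim_{\lambda\to\infty}F_\theta(\lambda) = \E_\mu\langle X,\theta\rangle = 0$, because $X$ is centered. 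For the second limit, the lower bound $F_\theta(\lambda)\ge \E\langle X,\theta\rangle$ comes from taking $t=\theta$. For the upper bound, the maximizer satisfies $\|\hat t_\lambda-\theta\|_2 \le 2\|X\|_2/\lambda$, so the supremum is at most $\langle X,\theta\rangle + 2\|X\|_2^2/\lambda$, but this route needs a second moment. If that moment is unavailable, I will instead use dominated convergence: pointwise $\hat t_\lambda\to\theta$, and the bound $\|X\|_2\sup_{t \in T}\|t\|_2$ dominates. This endpoint limit is the main technical point to check carefully. It yields $\int_0^\infty \varepsilon_\mu^2/\sigma^2\,\ud\sigma \le 2w_\mu(T)$ for every $\pi$, and hence $w_\mu(T)\ge\tfrac12\BayesOpt{\mu}{T}$.
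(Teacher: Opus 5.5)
Your proposal is correct and follows the paper's route. You bound $\varepsilon_\mu(\sigma,\pi)^2$ by the risk of the projection estimator $\Pi_T(\theta+\sigma X)$, and the resulting integral against $\ud\sigma/\sigma^2$ is then $2\,w_\mu(T)$. The only difference is that the paper imports this identity from \cite[Theorem~2.2]{PatZhi26}, applied to $T-\theta$ together with translation invariance of $w_\mu$. You instead prove it directly via Danskin's envelope theorem for $F_\theta$, with centering entering through $F_\theta(\infty)=\E_\mu\langle X,\theta\rangle=0$. Your dominated-convergence treatment of that endpoint is valid without any second-moment assumption.
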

\begin{proof}
    Fix $\theta \in T$ and $\pi \in \AllMeasures{T}$. Since $X \sim \mu$ is centered, it holds that 
    \begin{equation}
    \label{eqn:translation-invariance}
    w_\mu(T) = w_\mu(T - \theta), 
    \quad \mbox{for any}~\theta \in T.
    \end{equation}
    Denote the projection onto a nonempty, closed convex set $K \subset \R^n$ by
    \[
    \Pi_K(x) = \argmin_{y \in K} \, \|x - y\|_2, \quad 
    \mbox{for}~x\in \R^n.
    \]
    It is immediate from the definition that for any $r > 0$, and any $x, h \in \R^n$ it holds that
    \[
    \Pi_{K}(x + h) = h + \Pi_{K - h}(x) \quad \mbox{and} \quad 
    \Pi_K(rx) = r \, \Pi_{r^{-1}K}(x).
    \]
    Thus, applying~\cite[Theorem~2.2]{PatZhi26} to  $T - \theta$ 
    yields,
    \begin{equation}
    \label{eqn:projection-identity}
    2\,  w_\mu(T-\theta) =  \int_0^\infty 
    \E_\mu \|\Pi_{\tfrac{T-\theta}{\sigma}}(X)\|_2^2 \, \ud \sigma 
    =  
    \int_0^\infty 
    \frac{\E_\mu \|\Pi_{T}(\theta + \sigma X) - \theta\|_2^2}{\sigma^2} \, \ud \sigma.
    \end{equation}
    Integrating with respect to $\pi$ and applying Fubini's theorem, we obtain from~\cref{eqn:translation-invariance,eqn:projection-identity} that 
    \[
    w_\mu(T) = 
    \E_{\theta \sim \pi}
    w_{\mu}(T - \theta) 
    = 
    \half \int_0^\infty 
    \frac{\E_{\theta \sim \pi} \E_{X \sim \mu} \|\Pi_{T}(\theta + \sigma X) - \theta\|_2^2}{\sigma^2} \, \ud \sigma
    \geq 
    \half 
    \int_0^\infty 
    \frac{\eps_\mu(\sigma, \pi)^2}{\sigma^2} \, \ud \sigma.
    \]
    Passing to the supremum over $\pi \in \AllMeasures{T}$ yields the claim.
\end{proof}

\subsection{Step 2: Lower bound via Liu's distortion integral}

We now further lower bound $\BayesOpt{\mu}{T}$, 
using the rate-distortion integral introduced in~\cite{Liu26}. 
We make use of standard notions from information theory (\eg mutual information, conditional mutual information, data processing inequality); we refer the reader to~\cite{PolWu25} for details and definitions.

\subsubsection{Liu's construction} 
We now describe the construction in~\cite[Definition~3]{Liu26}.
For $r \geq 0$ and $\pi \in \AllMeasures{\R^n}$ define the
\emph{self-coupling rate distortion},
\[
i_\pi(r) =
\inf\Big\{\, I(Z;\widehat Z) :
Z,\widehat Z \sim \pi,\ 
\E \|Z - \widehat Z\|_2^2 \leq r^2
\,\Big\}.
\]
To be clear, the infimum ranges over all couplings of the pair $(Z, \widehat Z)$, 
subject to the constraints.
We then consider \emph{Liu's distortion integral},
\[
\LiuFunctional{T} =
\sup_{\pi \in \AllMeasures{T}}
\int_0^{\diam(T)} \sqrt{i_\pi(r)} \, \ud r.
\]

\subsubsection{From measures to Liu's distortion integral}

We now lower bound $\BayesOpt{\mu}{T}$ in terms of $\LiuFunctional{T}$.

\begin{lemma}
\label{lem:bayes-to-liu}
    For any $\pi \in \AllMeasures{T}$, it holds that 
    \begin{equation}
    \label{ineq:final-lower-bound-on-integrated-bayes-risk}
    \sqrt{\klconst(\mu)}
    \int_0^\infty
    \frac{\varepsilon_\mu(\sigma,\pi)^2}{\sigma^2}
    \, \ud \sigma \geq
    \frac{1}{4} \, 
    \int_0^\infty \sqrt{i_\pi(r)} \, \ud r.
    \end{equation}
    Consequently,
     $\sqrt{\klconst(\mu)} \, \BayesOpt{\mu}{T} \gtrsim \LiuFunctional{T}$.
\end{lemma}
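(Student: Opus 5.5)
The plan is to go through an information-theoretic ``posterior sampling'' coupling. Fix $\pi \in \AllMeasures{T}$, draw $\theta \sim \pi$, and for each noise level $\sigma$ put $Y_\sigma = \theta + \sigma X$. Let $\widehat Z_\sigma$ be a sample from the posterior of $\theta$ given $Y_\sigma$, conditionally independent of $\theta$ given $Y_\sigma$. Then $(\theta, \widehat Z_\sigma)$ is a self-coupling of $\pi$. The standard posterior-sampling identity gives
\[
\E\|\theta - \widehat Z_\sigma\|_2^2 = 2\,\eps_\mu(\sigma,\pi)^2,
\]
and the data processing inequality gives $I(\theta;\widehat Z_\sigma) \leq I(\theta;Y_\sigma)$. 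Combining these,
\[
i_\pi\big(\sqrt2\,\eps_\mu(\sigma,\pi)\big) \leq I(\theta;Y_\sigma).
\]
Since $i_\pi$ is nonincreasing in $r$, the same bound holds for every $r \geq \sqrt 2\,\eps_\mu(\sigma,\pi)$.

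The next step is to bound $I(\theta;Y_\sigma)$ by Bayes risks at larger noise levels, using \eqref{ineq:quadratic-KL-assumption}. The assumption gives, for any $\theta,\theta'$,
\[
\kl{\mathrm{law}(\theta+\sigma X)}{\mathrm{law}(\theta'+\sigma X)} = \kl{\mu_{\theta/\sigma}}{\mu_{\theta'/\sigma}} \leq \klconst(\mu)\,\|\theta-\theta'\|_2^2/\sigma^2.
\]
Take $s > \sigma$ and an observation $Y_s = \theta + sX'$ whose noise $X'$ is an independent copy of $X$. By the chain rule,
\[
I(\theta;Y_\sigma) \leq I(\theta;Y_s) + I(\theta;Y_\sigma \mid Y_s).
\]
The conditional term is at most $\E\,\kl{P_{Y_\sigma\mid\theta}}{P_{Y_\sigma\mid Y_s}}$. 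Here $P_{Y_\sigma\mid Y_s}$ is the mixture of $\mathrm{law}(\theta'+\sigma X)$ over $\theta'$ drawn from the posterior given $Y_s$. Joint convexity of KL then bounds this term by $\klconst(\mu)\cdot 2\eps_\mu(s,\pi)^2/\sigma^2$. Iterating along the dyadic levels $\sigma_k = 2^k\sigma$, and using $I(\theta;Y_s) \to 0$ as $s \to \infty$ (since $T$ is bounded), I get
\[
I(\theta;Y_\sigma) \leq 8\,\klconst(\mu)\sum_{k\geq 1} \frac{\eps_\mu(2^k\sigma,\pi)^2}{(2^k\sigma)^2}.
\]
Replacing the fixed dyadic grid by an average over starting points in $[\sigma,2\sigma]$ should turn this into a bound by $C\,\klconst(\mu)\int_\sigma^\infty \eps_\mu(s,\pi)^2 s^{-3}\,\ud s$.

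Finally I pass from this pointwise bound to \eqref{ineq:final-lower-bound-on-integrated-bayes-risk}, and I expect this to be the main obstacle. The difficulty is that $\sigma\mapsto\eps_\mu(\sigma,\pi)$ need not be monotone for non-Gaussian $\mu$, so one cannot simply substitute $r = \sqrt2\,\eps_\mu(\sigma,\pi)$. What I would try first is to discretize on dyadic scales $\sigma_j = 2^j$ and set $r_j = \sqrt2\,\sup_{s\leq \sigma_j}\eps_\mu(s,\pi)$, which is nondecreasing in $j$ and still satisfies $i_\pi(r_j) \leq I(\theta;Y_{\sigma_j})$ by monotonicity of $i_\pi$ together with the bound on $I(\theta;Y_\sigma)$. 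This gives
\[
\int_0^\infty \sqrt{i_\pi(r)}\,\ud r \leq \sum_j (r_{j+1}-r_j)\sqrt{I(\theta;Y_{\sigma_j})}.
\]
I would then insert the tail bound on $I(\theta;Y_{\sigma_j})$ and use a Cauchy--Schwarz or Hardy-type inequality to control the sum by $\sqrt{\klconst(\mu)}\sum_j \eps_\mu(\sigma_j,\pi)^2/\sigma_j$, which is comparable to $\sqrt{\klconst(\mu)}\int_0^\infty \eps_\mu(\sigma,\pi)^2\sigma^{-2}\,\ud\sigma$. The trivial bound $\eps_\mu(\sigma,\pi) \leq \diam(T)$ controls the small-$\sigma$ range. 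The dyadic discretization and the sup-envelope will cost constants, which I will track to obtain the factor $\tfrac14$ (or at least a universal constant). The ``consequently'' claim then follows by taking the supremum over $\pi$ and noting that $i_\pi(r) = 0$ for $r > \diam(T)$ (the independent coupling is admissible there), so the integral over $(0,\infty)$ coincides with the one defining $\LiuFunctional{T}$.
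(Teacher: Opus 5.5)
Your first two steps are sound. Posterior sampling gives $i_\pi(\sqrt2\,\varepsilon_\mu(\sigma,\pi))\leq I(\theta;Y_\sigma)$. The one-step bound $I(\theta;Y_\sigma)\leq I(\theta;Y_s)+2\klconst(\mu)\varepsilon_\mu(s,\pi)^2/\sigma^2$, iterated dyadically and averaged over grid offsets, does yield $I(\theta;Y_\sigma)\leq c\,\klconst(\mu)J(\sigma)$ with $J(\sigma)=\int_\sigma^\infty\varepsilon_\mu(s,\pi)^2s^{-3}\ud s$.

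The gap is the last step. You leave it open, and the plan you sketch cannot work. Take $r_j=\sqrt2\sup_{s\leq\sigma_j}\varepsilon_\mu(s,\pi)$, and suppose $\sigma\mapsto\varepsilon_\mu(\sigma,\pi)$ has a narrow bump of height $a$ between $\sigma_j$ and $\sigma_{j+1}$. Then $r_{j+1}-r_j\approx\sqrt2\,a$. Yet the bump contributes nothing to $\sum_j\varepsilon_\mu(\sigma_j,\pi)^2/\sigma_j$, and arbitrarily little to $\int\varepsilon_\mu^2\sigma^{-2}$. Meanwhile your tail bound controls $\sqrt{I(\theta;Y_{\sigma_j})}$ only through the \emph{later} values of $\varepsilon_\mu$, which may all be of size $\delta\ll a$ over a long range. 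That term is then of order $a\delta\sqrt{\klconst(\mu)}/\sigma_j$, against a right-hand side of order $\delta^2\sqrt{\klconst(\mu)}/\sigma_j$. So the inequality you intend to prove at that stage is false for such profiles, and no Hardy or Cauchy--Schwarz argument rescues it. Closing it would need regularity of $\sigma\mapsto\varepsilon_\mu(\sigma,\pi)$ across scales, which is unavailable for non-Gaussian $\mu$. The same issue breaks your claim that $\sum_j\varepsilon_\mu(2^j,\pi)^2\,2^{-j}$ is comparable to the integral.

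The paper avoids this by bounding $I(\theta;Y_\sigma)$ through $i_\pi$ itself rather than through Bayes risks at other noise levels. It runs your chain-rule/convexity argument with $\theta'$ taken from a near-optimal self-coupling of $\pi$ at distortion $2r$, instead of from a posterior. This gives $I(\theta;Y_\sigma)\leq i_\pi(2r)+4\klconst(\mu)r^2/\sigma^2$. Combined with your first inequality and the monotonicity of $i_\pi$, it shows $\sqrt2\,\varepsilon_\mu(\sigma,\pi)\geq r$ for \emph{every} $\sigma>2\sqrt{\klconst(\mu)}\,r/\sqrt{i_\pi(r)-i_\pi(2r)}$. That pointwise statement is exactly what the layer-cake identity $\varepsilon^2=\int_0^\infty r\1\{\sqrt2\varepsilon\geq r\}\ud r$ needs, with no monotonicity in $\sigma$. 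Telescoping $i_\pi(r)=\sum_{k\geq0}(i_\pi(2^kr)-i_\pi(2^{k+1}r))$ and using subadditivity of the square root then give the factor $\tfrac14$.

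Alternatively, your own tail bound suffices if you use the \emph{lower} envelope $g(\sigma)=\inf_{s\geq\sigma}\varepsilon_\mu(s,\pi)$ instead of the upper one. Since $J$ is nonincreasing (even though $I(\theta;Y_s)$ need not be), you get $i_\pi(r)\leq c\,\klconst(\mu)J(\sigma)$ whenever $r>\sqrt2\,g(\sigma)$. You also now have $J(\sigma)\geq g(\sigma)^2/(2\sigma^2)$. Writing $\sqrt{J(\tau)}=\int_\tau^\infty\varepsilon_\mu(u,\pi)^2u^{-3}(2\sqrt{J(u)})^{-1}\ud u$ and applying Fubini then gives $\int_0^\infty\sqrt{i_\pi(r)}\ud r\leq\sqrt{c\,\klconst(\mu)}\int_0^\infty\varepsilon_\mu(\sigma,\pi)^2\sigma^{-2}\ud\sigma$.
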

\begin{proof}
Let $\theta \sim \pi$ and $X \sim \mu$ be independent. Set
$Y_\sigma = \theta + \sigma X$.
For any $\eta, \eta' \in \R^n$ and any $\sigma > 0$, 
\begin{equation}
\label{ineq:scaled-KL-assumption}
\kl{\cL(\eta+\sigma X)}{\cL(\eta'+\sigma X)}
= 
\kl{\mu_{\eta/\sigma}}{\mu_{\eta'/\sigma}}
\leq
\klconst(\mu)\frac{\|\eta-\eta'\|_2^2}{\sigma^2}.
\end{equation}
We claim that for every $r>0$ and $\sigma>0$,
\begin{equation}
\label{ineq:rate-distortion-bounds}
i_\pi(\sqrt{2} \varepsilon_\mu(\sigma, \pi)) 
\stackrel{{\rm (i)}}\leq 
I(\theta;Y_\sigma), 
\quad \mbox{and} \quad 
I(\theta;Y_\sigma)
\stackrel{{\rm (ii)}}\leq
i_\pi(2r)
+4\klconst(\mu)\frac{r^2}{\sigma^2}.
\end{equation}
Assuming~\eqref{ineq:rate-distortion-bounds}, the result follows 
quickly. Define the threshold 
\[
\sigma_\mu(r, \pi) = \frac{2\sqrt{\klconst(\mu)}\,r}
{\sqrt{i_\pi(r)-i_\pi(2r)}}.
\]
By~\cref{ineq:rate-distortion-bounds}, whenever
$i_\pi(r)>i_\pi(2r)$ and $\sigma>\sigma_\mu(r, \pi)$,
we have
$\sqrt{2} \varepsilon_\mu(\sigma,\pi)\geq r$. 
Hence, 
\begin{multline}
\label{ineq:lower-bound-on-integrated-bayes-risk}
\int_0^\infty
\frac{\varepsilon_\mu(\sigma,\pi)^2}{\sigma^2}
\ud\sigma
= 
\int_0^\infty\!\int_0^{\infty}
\frac{r}{\sigma^2}\1_{\{\sqrt{2}\varepsilon_\mu(\sigma,\pi)\geq r\}}
\,\ud r\,\ud \sigma
 \geq
\int_0^\infty\!\int_{\sigma_\mu(r, \pi)}^{\infty}
\frac{r}{\sigma^2}\1_{\{i_\pi(r)>i_\pi(2r)\}}
\,\ud \sigma\,\ud r
\\\hspace{11em}=
\int_0^\infty \frac{r}{\sigma_\mu(r, \pi)} \1_{\{i_\pi(r) > i_\pi(2r)\}} \, \ud r
=
\frac{1}{2\sqrt{\klconst(\mu)}}
\int_0^\infty
\sqrt{i_\pi(r)-i_\pi(2r)}
\,\ud r.
\end{multline}
Since $i_\pi$ is nonincreasing and $\lim_{r\to\infty} i_\pi(r)=0$, 
it holds that 
\[
i_\pi(r)
=
\sum_{k = 0}^\infty
\big(i_\pi(2^k r)-i_\pi(2^{k+1}r)\big), 
\quad \mbox{for all}~r>0.
\]
The subadditivity of $\sqrt{\cdot}$ and the change of variables $u = 2^k r$ yields: 
\begin{equation}
\label{ineq:distortion-integral-upper-bound}
\int_0^\infty \sqrt{i_\pi(r)}\,\ud r
\leq
\sum_{k=0}^\infty  \frac{1}{2^k}
\int_0^\infty
\sqrt{i_\pi(u)-i_\pi(2u)}
\,\ud u 
=
2
\int_0^\infty
\sqrt{i_\pi(u)-i_\pi(2u)}
\,\ud u.
\end{equation}
Combining inequalities~\eqref{ineq:lower-bound-on-integrated-bayes-risk} and~\eqref{ineq:distortion-integral-upper-bound} 
establishes~\eqref{ineq:final-lower-bound-on-integrated-bayes-risk}. 

Hence, we turn to the bounds~\eqref{ineq:rate-distortion-bounds}. 
Indeed, let
$\hat \theta=\hat \theta(Y_\sigma) = \E [\theta \mid Y_\sigma]$. 
Let $\widetilde \theta$ be conditionally independent of $\theta$ given
$\hat \theta$, with $\cL(\widetilde \theta \mid \hat \theta)
=\cL(\theta \mid \hat \theta)$. 
Then $(\theta,\widetilde \theta)$ is a self-coupling of $\pi$, and
\[
\E\|\theta-\widetilde \theta\|_2^2
=
2\E\|\theta-\E[\theta\mid \hat \theta]\|_2^2
= 
2 \E_{\hat \theta} \inf_{a \in \R^n} \E[\|\theta - a\|_2^2\mid \hat \theta]
\leq  
2\E\|\theta-\hat \theta\|_2^2
=2 \varepsilon_\mu(\sigma, \pi)^2.
\]
Hence, the data processing inequality yields~\cref{ineq:rate-distortion-bounds}(i):
\[
i_\pi(\sqrt{2} \varepsilon_\mu(\sigma, \pi)) \leq I(\theta;\widetilde \theta)
\leq
I(\theta;\hat \theta)
\leq
I(\theta;Y_\sigma).
\]
For the second inequality, fix $\eta>0$. 
There exists a self-coupling $(\theta,\theta')$ of $\pi$
such that
\begin{equation}
\label{ineq:self-coupling-bound}
\E\|\theta-\theta'\|_2^2 \leq 4r^2,
\qquad
I(\theta;\theta')\leq i_\pi(2r)+\eta.
\end{equation}
Let $X \sim \mu$ be independent of $(\theta,\theta')$. By the chain rule,
\begin{equation}
\label{eqn:mutual-information-chain-rule}
I(\theta;Y_\sigma)
\leq
I(\theta;\theta',Y_\sigma)
=
I(\theta;\theta')+I(\theta;Y_\sigma\mid \theta').
\end{equation}
Now note that by~\cref{ineq:scaled-KL-assumption,,ineq:self-coupling-bound}, 
\begin{multline}
\label{ineq:upper-bound-on-mutual-information-conditional-on-theta'}
I(\theta; Y_\sigma \mid \theta')
= \E_{\theta'} \Big[ \inf_{Q \in \AllMeasures{\R^n}} 
\E_{\theta \mid \theta'}
\kl{\cL(\theta + \sigma X)}{Q} \Big] 
\\ \leq
\E_{\theta, \theta'} \Big[ 
\kl{\cL(\theta + \sigma X)}{\cL(\theta' + \sigma X)} \Big]
\leq 
\klconst(\mu)\frac{\E\|\theta-\theta'\|_2^2}{\sigma^2}
\leq 4\klconst(\mu)\frac{r^2}{\sigma^2}.
\end{multline}
Combining~\cref{ineq:self-coupling-bound,,eqn:mutual-information-chain-rule,ineq:upper-bound-on-mutual-information-conditional-on-theta'},
and letting $\eta \downarrow 0$ proves~\eqref{ineq:rate-distortion-bounds}(ii).
\end{proof}

\subsection{Step 3: Comparison to the generic chaining functional and conclusion}

The final step of the argument is to relate the distortion integral 
to the generic chaining functional; it is implicit in~\cite{Liu26}. 
The details are given below, borrowing essentially all of the 
main ideas from~\cite{Liu26}.

\begin{theorem}[{\cite{Liu26}}]
\label{thm:Liu-lower}
    It holds that $\LiuFunctional{T} \gtrsim \gamma_2(T)$.
\end{theorem}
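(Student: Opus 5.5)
I would prove \Cref{thm:Liu-lower} with Talagrand's \emph{growth condition} criterion for $\gamma_2$ (e.g.~\cite[Theorem~2.8.1]{Tal21}). Define on subsets $A\subset T$ the functional
\[
F(A)=\sup_{\pi\in\AllMeasures{A}}\int_0^\infty \sqrt{i_\pi(r)}\,\ud r ,
\]
so that $F(T)=\LiuFunctional{T}$. Here I use that $i_\pi(r)=0$ once $r\geq \diam(A)$: the independent coupling has zero mutual information and $\E\|Z-\widehat Z\|_2^2\leq \diam(A)^2$. The functional $F$ is clearly monotone in $A$. By the criterion, it then suffices to check the growth condition. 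Fix universal constants $c$ and $c'$ with $c$ large compared to $c'$. Let $t_1,\dots,t_m\in T$ be $a$-separated with $m=2^{2^{k}}$, and let $H_\ell\subset B(t_\ell,a/c)$. The condition to verify is
\[
F\Big(\bigcup_{\ell} H_\ell\Big)\geq c_0\, a\, 2^{k/2}+\min_\ell F(H_\ell).
\]
Then $\gamma_2(T)\lesssim F(T)$ follows.

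To verify it, I take near-optimal priors $\pi_\ell$ on $H_\ell$ and set $\pi=\frac1m\sum_\ell \pi_\ell$, viewed as the law of $Z$ drawn by first choosing a uniform label $L$ and then $Z\sim\pi_L$. Take any self-coupling $(Z,\widehat Z)$ of $\pi$ with $\E\|Z-\widehat Z\|_2^2\leq r^2$, and let $\widehat L$ be the label of the ball containing $\widehat Z$. Since the balls are $a/2$-separated, Markov's inequality gives $\P(\widehat L\neq L)\leq 4r^2/a^2$. Fano's inequality plus data processing then give
\[
I(Z;\widehat Z)\geq I(L;\widehat L)\geq \big(1-4r^2/a^2\big)\log m-\log 2 .
\]
For $r\leq a/c'$ this is at least $\tfrac12 \log m\gtrsim 2^k$.

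The integral splits into two ranges. On $r\in[2a/c,\,a/c']$ every $\pi_\ell$ has zero rate because $\diam H_\ell\leq 2a/c$, so the Fano bound alone contributes $\gtrsim a\,2^{k/2}$ to $\int\sqrt{i_\pi}$. On $r<2a/c$ I need $i_\pi(r)\geq \min_\ell i_{\pi_\ell}(r)$, or a version averaged over $\ell$, with no multiplicative loss, so that this range reproduces $\min_\ell F(H_\ell)$.

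The hard step is this second range. I would write
\[
I(Z;\widehat Z)=I(L;\widehat Z)+I(Z;\widehat Z\mid L)
\]
and aim to show $I(Z;\widehat Z\mid L)\geq \frac1m\sum_\ell i_{\pi_\ell}(r)$ up to a correction. The difficulty is that, conditionally on $L=\ell$, the variable $\widehat Z$ is not exactly distributed as $\pi_\ell$: a mass of order $r^2/a^2$ can leak into other balls. I would repair the conditional coupling by replacing the leaked part with an independent draw from $\pi_\ell$ inside $H_\ell$. This changes the distortion only by $O(r^2)$, because $\diam H_\ell$ is small, and it changes the mutual information by at most $h(\P(\widehat L\neq L))$ plus the label term. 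Both losses are then absorbed by the Fano surplus $I(L;\widehat Z)\gtrsim \log m$, which is available for every $r\leq a/c'$.

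I expect a mild rescaling of $r$ in the repaired coupling, for example comparing with $i_{\pi_\ell}(2r)$, to be necessary. That costs only a constant factor in the integral, and the factor can be absorbed by choosing $c$ large, since the gain term scales with $a$ and not with $\diam H_\ell$. Once the growth condition holds, Talagrand's theorem gives $\gamma_2(T)\lesssim F(T)=\LiuFunctional{T}$.
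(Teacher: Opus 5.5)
Your route is different from the paper's, and the step you flag as hard is a genuine gap, not a technicality. Talagrand's growth condition must hold with coefficient exactly one in front of $\min_\ell F(H_\ell)$, because it is iterated over all scales. The only losses it tolerates are additive ones of size $O(a2^{k/2})$.

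So your plan to compare with $i_{\pi_\ell}(2r)$ and absorb the factor by taking $c$ large fails. Since $\int_0^\infty\sqrt{i_{\pi_\ell}(2r)}\,\ud r=\frac12\int_0^\infty\sqrt{i_{\pi_\ell}(u)}\,\ud u$, you lose half of $F(H_\ell)$. That half is not $O(\diam H_\ell)$: for $N$ scaled orthonormal vectors, Fano gives $F(H_\ell)\gtrsim\diam(H_\ell)\sqrt{\log N}$. Hence the loss can exceed $a2^{k/2}$ for every choice of $c$. Likewise, a pointwise bound $i_\pi(r)\geq\min_\ell i_{\pi_\ell}(r)$ would not reproduce $\min_\ell F(H_\ell)$, since $\int\sqrt{\min_\ell i_{\pi_\ell}(r)}\,\ud r$ can be far below $\min_\ell\int\sqrt{i_{\pi_\ell}(r)}\,\ud r$.

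The leakage you worry about is actually harmless. The leaked $\widehat Z$ lie at distance at least $a-2a/c$ from $Z$. Resampling them inside $H_\ell$ so that $\widehat Z\mid L=\ell\sim\pi_\ell$ lowers the distortion, and by data processing it does not increase $I(Z;\widehat Z\mid L)$.

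The real obstruction is that the constraint controls only the average $\frac1m\sum_\ell D_\ell\leq r^2$ of the per-component distortions $D_\ell=\E[\|Z-\widehat Z\|_2^2\mid L=\ell]$. What you can prove is $i_\pi(r)\geq I(L;\widehat Z)+G(r^2)$, where $G(D)=\inf\{\frac1m\sum_\ell i_{\pi_\ell}(\sqrt{D_\ell}):\frac1m\sum_\ell D_\ell\leq D\}$. Coupling within components gives $i_\pi(r)\leq\log m+G(r^2)$, so this is essentially sharp for your mixture prior.

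The infimum concentrates the distortion budget on the steepest profiles. Suppose half of the maps $D\mapsto i_{\pi_\ell}(\sqrt D)$ were one convex linear ramp and half another, with equal $\int\sqrt{\cdot}$ but different widths. Then $\int\sqrt{G(r^2)}\,\ud r$ falls short of $\min_\ell F(H_\ell)$ by a constant fraction of $F(H_\ell)$. When the $t_\ell$ are at mutual distance of order $a$, the whole $\log m$ gain is worth only $O(a\sqrt{\log m})$ in the integral, so it cannot repay this. Nothing in your sketch excludes such profiles, so a new idea is needed.

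The paper never proves a mixture inequality for $i_\pi$. Liu's penalization lemma and Sion's minimax theorem dualize $\LiuFunctional{F}$ into $\inf_{\rho\in\AllMeasures{F}}\max_{t\in F}\int_0^\infty\Phi_\rho(t,\alpha)\,\ud\alpha$. For a fixed measure this quantity is local in $t$, so no coupling can redistribute distortion across components. Liu's data-processing estimate then bounds it below by $I_\rho(t)-O(\diam F)$, and the error is absorbed via $\Gamma(F)\gtrsim\diam(F)$. The classical comparison $\Gamma\gtrsim\gamma_2$ together with finite approximation concludes.
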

\begin{proof}
Define for a set $S \subset \R^n$, the quantity 
\[
\Gamma(S) = \inf_{m \in \AllMeasures{S}}
\sup_{t \in S} I_m(t),
\quad
I_m(t) =
\int_0^{\diam(S)} \sqrt{\log\frac{1}{m(B(t,r))}} \, \ud r.
\]
Here $B(t,r)=\{u\in S:\|u-t\|_2\leq r\}$.
It suffices to show that $\LiuFunctional{F} \gtrsim \Gamma(F)$ for
all finite subsets $F \subset T$; 
this follows from~\cite[\S 6]{Liu26}; 
the details are given below as~\Cref{prop:liu-functional-controls-gamma}.
Then,
\[
\LiuFunctional{T} \geq
\sup_{\substack{F \subset T \\ |F|<\infty}} 
\LiuFunctional{F} 
\gtrsim \sup_{\substack{F \subset T \\ |F|<\infty}} \Gamma(F)
\stackrel{\rm(i)}\gtrsim \sup_{\substack{F \subset T \\ |F|<\infty}} \gamma_2(F) 
\stackrel{\rm(ii)}\gtrsim \gamma_2(T). 
\]
Note that inequalities (i) and (ii) 
are standard and do not rely on the use 
of the majorizing measures theorem; for proofs 
see~\cite[\S3.3.3]{Tal21} and~\cite[Theorem~1.3.6(c)]{Tal05}, respectively.
\end{proof}

To conclude, we simply apply~\Cref{lem:Bayes-lower,lem:bayes-to-liu} and~\Cref{thm:Liu-lower} in succession:
\[
\sqrt{\klconst(\mu)} \, w_\mu(T) \gtrsim \sqrt{\klconst(\mu)} \, \BayesOpt{\mu}{T}
\gtrsim 
\LiuFunctional{T}
\gtrsim 
\gamma_2(T).
\]

\section{Some remarks on the main result}
\label{sec:remarks-on-main-result}

We briefly discuss some important aspects of~\Cref{thm:quadratic-kl-to-gamma-2}.

\begin{remark}[The Gaussian case]  
Let $G \sim \Normal{0}{I_n}$; 
it is easy to check that 
$\klconst(\Normal{0}{I_n}) = \tfrac{1}{2}$. In this case,~\Cref{thm:quadratic-kl-to-gamma-2} implies that there exists a constant $C > 0$ such that
\begin{equation}
\label{eqn:canonical-lower-bound}
\E_{G \sim \Normal{0}{I_n}}\Big[ \sup_{x \in K} \, \langle G, x \rangle\Big] \geq C \, \gamma_2(K)
\quad \mbox{for all}~n \geq 1,~\mbox{all nonempty, bounded}~K \subset \R^n.
\end{equation}
It is well known that inequality~\eqref{eqn:canonical-lower-bound} implies (\eg by finite approximation) the inequality
\begin{equation}
\label{ineq:gaussian-mmt-lower}
\E \sup_{t \in T} G_t \gtrsim \gamma_2(T, d),
\end{equation}
for any centered Gaussian process $\{G_t\}_{t \in T}$ with canonical metric  
$d(t, s) = (\E (G_t - G_s)^2)^{1/2}$. This is precisely the lower bound in the majorizing measures theorem; compare with~\eqref{eqn:majorizing-measures-theorem}.
\end{remark}

\begin{remark}[Measurability concerns]
We do not dwell on measurability issues here; however, note that when $T \subset \R^n$ is bounded the map $x \mapsto \sup_{t \in T} \, \langle x, t\rangle$ is finite everywhere and Lipschitz continuous. Hence $\sup_{t \in T} \, \langle X, t \rangle$
is a measurable function of a random vector $X$.
\end{remark}

\begin{remark}[The role of priors]
As discussed in~\cite{PatZhi26},
the statistical properties of the Gaussian sequence model (\ie the family of laws, $\theta + \sigma G$ for $G \sim \Normal{0}{I_n}$ and $\theta \in T$) 
can be related to the Gaussian width (and, more generally, the $\mu$-width by replacing $G$ with a general random vector $X \sim \mu$).
This perspective leads to an information-theoretic proof of the Sudakov minoration, \eg \cite[\S 5.2]{PatZhi26}. There, the authors employed the statistical minimax rate, which essentially corresponds to picking a family of worst-case priors $\pi_\sigma \in \AllMeasures{T}$, which may depend on $\sigma > 0$. 
This approach produces chaining bounds analogous to Dudley's entropy integral~\cite[pp.~21-22]{PatZhi26}. In contrast, here we apply the integral identity~\cite[Theorem 2]{PatZhi26} for a prior $\pi$, simultaneously for \emph{all} $\sigma > 0$. This method is suggested by Fernique's formulation of
majorizing measures~\cite{Fer75}, 
and was employed in~\cite{Liu26,Zad26}.\footnote{We pursued~\Cref{thm:quadratic-kl-to-gamma-2} after the paper~\cite{Zad26} was posted.}
\end{remark}

\begin{remark}[Comparison with~\cite{Zad26}]
In the Gaussian setting (\ie $\mu = \Normal{0}{I_n}$), the recent work by I.\ Zadik~\cite[\S 2.3]{Zad26} presented a proof of the lower bound in the majorizing measures theorem, based on Bayesian arguments (essentially invoking~\cite[Theorem~2.2]{PatZhi26} as in Step 1 above, but for finite $T$). In~\cite{Zad26}, Step 2 in our argument is replaced by using the following identity which makes heavy use of Gaussianity: $\sigma^3 \partial_\sigma I(\theta; Y_{\sigma}) = - \varepsilon_\mu(\sigma, \pi)^2$, translated to our notation. An analogue of~\Cref{lem:bayes-to-liu} is obtained by leveraging information-theoretic identities~\cite[Lemmas 2.4, 2.5, 2.6]{Zad26}. For general measures, this argument has to be modified: $\sigma \mapsto I(\theta; Y_\sigma)$ need not be differentiable in general. Nonetheless,
as we showed,~\Cref{lem:bayes-to-liu} follows from the substantially weaker KL condition,~\cref{ineq:quadratic-KL-assumption}.
\end{remark}

\begin{remark}[Comparison with~\cite{Liu26}]
The paper by J.\ Liu~\cite{Liu26} also contains a proof of the majorizing measures theorem in the Gaussian setting. The proof 
is quite different: it uses a ``lifting'' procedure~\cite[\S 5]{Liu26}, which compares a non-stationary to a (higher-dimensional) stationary process, for which Dudley's entropy integral is known to be sharp~\cite{Fer75}. Due to this key Gaussian input, we do not expect this proof strategy to generalize easily under the weaker KL condition~\eqref{ineq:quadratic-KL-assumption}.
\end{remark}

\begin{remark}[The $\log$-smooth setting]
A sufficient condition for~\eqref{ineq:quadratic-KL-assumption} to hold is that $\mu$ is $\log$-smooth. Specifically, suppose that $\mu$ is a centered probability measure on $\R^n$ with density $\ud \mu = \exp(-V(x)) \, \ud x$ for some $V \in C^2(\R^n)$. If for any $x \in \R^n$, it holds that $\nabla^2 V(x) \preceq \alpha I_n$ for some $\alpha > 0$, then $\klconst(\mu) \leq \tfrac{\alpha}{2}$.
\Cref{thm:quadratic-kl-to-gamma-2} implies that 
$w_\mu(T) \gtrsim \tfrac{1}{\sqrt{\alpha}} \gamma_2(T)$ for all nonempty, bounded $T \subset \R^n$.
This inequality can also be derived from~\cite[Theorem~1.2]{Har04}; we omit the details. The condition $\klconst(\mu) <\infty$ is 
weaker: if $\tfrac{\ud \mu}{\ud x} = \tfrac{\exp(-\sum_{i=1}^n |x_i| )}{2^n}$, then 
$\klconst(\mu) \lesssim 1$, but $\mu$ is not $\log$-smooth.
\end{remark}

\appendix

\section{Liu's lower bound argument for the distortion integral}

Here we give the proof of~\Cref{prop:liu-functional-controls-gamma}, based on the ideas 
in~\cite{Liu26}.  

\begin{proposition}[{\cite{Liu26}}]
\label{prop:liu-functional-controls-gamma}
For a nonempty, finite set $F \subset \R^n$, it holds that 
$\LiuFunctional{F} \gtrsim \Gamma(F)$.
\end{proposition}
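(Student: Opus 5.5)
The plan is to reduce the $\inf_m\sup_t$ in $\Gamma(F)$ to an average over a single measure by a minimax argument. That measure will then serve both as the prior $\pi$ in $\LiuFunctional{F}$ and as the majorizing measure $m$.

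\emph{Minimax step.} Since $F$ is finite, $\AllMeasures{F}$ is a compact simplex. To get convexity in $m$, I would replace $\sqrt{\log(1/x)}$ by $\sqrt{1+\log(1/x)}$, which is convex and decreasing on $(0,1]$. This changes $I_m(t)$ by at most $\diam(F)$. The modified functional is convex and lower semicontinuous in $m$ and linear in the averaging measure $\nu\in\AllMeasures{F}$ over $t$. Sion's minimax theorem then gives
\[
\Gamma(F)\le \diam(F)+\sup_{\nu}\inf_m\int I_m(t)\,\ud\nu(t)\le \diam(F)+\sup_\nu \int I_\nu(t)\,\ud\nu(t),
\]
where the second inequality takes $m=\nu$. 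The additive $\diam(F)$ is harmless. A two-point prior on a diametral pair $s,t\in F$ gives $i_\pi(r)\gtrsim 1$ for $r\le c\,\diam(F)$, hence $\LiuFunctional{F}\gtrsim \diam(F)$.

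\emph{Rate-distortion lower bound with $m=\pi=\nu$.} Fix $\nu$, set $\pi=\nu$, and fix a self-coupling $(Z,\widehat Z)$ with $\E\|Z-\widehat Z\|_2^2\le r^2$. Write $p_z=\P(\widehat Z\in B(z,2r)\mid Z=z)$. Because the coupling is a self-coupling, the marginal of $\widehat Z$ is $\pi$, so
\[
I(Z;\widehat Z)=\E_Z\,\kl{\cL(\widehat Z\mid Z)}{\pi}.
\]
For each $z$, data processing applied to the indicator of $B(z,2r)$ gives
\[
\kl{\cL(\widehat Z\mid Z=z)}{\pi}\ge p_z\log\frac{1}{\pi(B(z,2r))}-\log 2 .
\]
By Markov's inequality, $\E p_Z\ge 3/4$. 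The output measure is forced to be $\pi$ for every $r$ simultaneously, so no mixture over scales is needed. Once the weights $p_z$ are removed, the bound becomes $i_\pi(r)\gtrsim \E_\nu\log(1/\nu(B(Z,2r)))-O(1)$. Concavity of $\sqrt{\cdot}$ then gives $\sqrt{i_\pi(r)}\gtrsim \E_\nu\sqrt{\log(1/\nu(B(Z,2r)))}-O(1)$. Integrating in $r$, rescaling $2r\mapsto r$, and absorbing the $O(\diam F)$ error yields $\LiuFunctional{F}\gtrsim \int I_\nu\,\ud\nu - O(\diam F)$. This is the desired bound.

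\emph{Main obstacle: removing the weights $p_z$.} Markov only controls $p_Z$ on average. The set where $p_z$ is small could carry most of the ``entropy'' $\log(1/\nu(B(z,2r)))$, and $r\mapsto p_z$ also varies with the coupling. I would attack this in two ways.

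First, use the freedom to choose $\nu$ optimally. A minimax-optimal $\nu$ can be taken to (approximately) equalize $t\mapsto I_\nu(t)$ on its support, in the spirit of complementary slackness. One can then hope to replace $\nu$ by its conditioning on a large-mass set where the balls' masses are comparable. This conditioning costs only a constant in the logarithms and restores the average.

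Second, if that fails, I would pass to a subset $S\subset F$ on which $\nu(B(z,2r))\approx$ constant, dyadically in $r$, using the fact that $\Gamma$ is essentially monotone under restriction up to constants. Within such a level set, $\E[p_Z L(Z)]\ge \tfrac34\min_S L$ and $\min_S L\approx \E L$. I expect the bookkeeping in this step, making the level-set decomposition compatible across all scales $r$ at once, to be the crux of Liu's argument in \cite[\S6]{Liu26}.
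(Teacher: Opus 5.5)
Your minimax step on the $\Gamma$ side, including the $\sqrt{1+\log(1/x)}$ convexification, is fine. So are the bound $\LiuFunctional{F}\gtrsim\diam(F)$ and the inequality $i_\pi(r)\ge\E[p_Z\log(1/\pi(B(Z,2r)))]-\log 2$.

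\textbf{The gap is the step you flag, and it is not bookkeeping.} The fixed-$r$ bound you aim for, $i_\nu(r)\gtrsim\E_\nu\log(1/\nu(B(Z,2r)))-O(1)$, is false. Let $F=\{t_0\}\cup S$, where $S$ consists of $N$ points at mutual distance $D$ and at distance $D$ from $t_0$. Take $\nu=\tfrac78\delta_{t_0}+\tfrac18\,\mathrm{Unif}(S)$ and $r=2D/5$. Couple by setting $\widehat Z=t_0$ on $\{Z=t_0\}$, and taking $\widehat Z\sim\mathrm{Unif}(S)$ independent of $Z$ on $\{Z\in S\}$. This is a self-coupling with $\E\|Z-\widehat Z\|_2^2\le D^2/8\le r^2$ and $I(Z;\widehat Z)=h(1/8)$, where $h$ is the binary entropy. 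Yet $2r<D$ gives $\E_\nu\log(1/\nu(B(Z,2r)))\ge\tfrac18\log(8N)\to\infty$.

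An average distortion constraint at a single scale lets a small-mass set be sacrificed. Here $p_z=1/N$ exactly on the points carrying the entropy. That entropy only reappears at smaller scales ($r\le D/4$ forces $i_\nu(r)\gtrsim\log N$), so only a multi-scale statement can see it. Neither of your repairs closes this. The saddle point equalizes the modified $I_{m^*}$ on the support of $\nu^*$ for the optimal $m^*$, not $I_{\nu^*}$ with $m=\nu^*$. And any $r$-dependent conditioning of $\nu$ breaks the requirement in $\LiuFunctional{F}$ that a single prior serve all scales.

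\textbf{The missing idea is Liu's penalization, which moves your Markov step inside a per-point quantity.} Since a self-coupling has output marginal $\pi$, for every $\alpha>0$ you have $I(Z;\widehat Z)+\alpha^{-2}\E\|Z-\widehat Z\|_2^2\ge\E_\pi\Phi_\pi(Z,\alpha)$, where
\[
\Phi_m(t,\alpha)=\inf_{\nu\in\AllMeasures{F}}\Big\{\alpha^{-2}\int\|t-u\|_2^2\,\ud\nu(u)+\mathrm{KL}(\nu\,\|\,m)\Big\}.
\]
Hence $i_\pi(r)\ge\E_\pi\Phi_\pi(Z,\alpha)-r^2/\alpha^2$. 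Now run your data-processing argument at a fixed $t$ for each competitor $\nu$. With $p=\nu(B(t,\rho))$, the penalty is at least $(1-p)\rho^2/\alpha^2$, so
\[
\Phi_m(t,\alpha)\ge\sup_{\rho>0}\min\Big\{\frac{\rho^2}{\alpha^2},\,\log\frac{1}{m(B(t,\rho))}\Big\}-\log 2 .
\]
A point with small $p$ now pays at that very point, instead of your relying on $\E p_Z\ge 3/4$ on average. A layer-cake computation then gives $\int_0^\infty\Phi_m(t,\alpha)\,\ud\alpha\ge I_m(t)-O(\diam F)$. Liu's penalization lemma gives $\int_0^\infty\sqrt{i_\pi(r)}\,\ud r\ge\frac14\int_0^\infty\E_\pi\Phi_\pi(Z,\alpha)\,\ud\alpha$.

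With these two facts, your outer structure actually goes through with $m=\pi=\nu$, for every $\nu$, so no optimality of $\nu$ is needed. The paper orders things differently. It applies Sion's theorem after penalization, to $\sup_\pi\inf_m\int_0^\infty\E_\pi\Phi_m(Z,\alpha)\,\ud\alpha$. There convexity in $m$ comes free from joint convexity of KL, so no convexification of $\sqrt{\log(1/x)}$ is needed.
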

\begin{proof}
  If $\diam(F) = 0$ there is nothing to prove; suppose $\diam(F) > 0$. Since $i_\pi(r)=0$ for $r\geq \diam(F)$, we may integrate to $\infty$ in the definition 
  of $\LiuFunctional{F}$.
  Liu introduces, for $m \in \AllMeasures{F}$ and $t \in F$, the quantity (see~\cite[Eq.~(81)]{Liu26}),
  \[
  \Phi_m(t,\alpha)
  =
  \inf_{\nu \in \AllMeasures{F}}
  \Big\{\, 
  \frac{1}{\alpha^2}\int \|t-u\|_2^2\,\ud \nu(u)
    + \kl{\nu}{m}
  \, \Big\}.
  \]
  Liu's penalization lemma
  \cite[Lemma~7, eqn.~(80)]{Liu26}, followed 
by the application of the Sion minimax theorem~
\cite[Lemma~7, eqn.~(81-82)]{Liu26} yields 
  \begin{equation}\label{eqn:liu-functional-controls-gamma}
  \LiuFunctional{F}
  \geq \frac{1}{4} \, 
  \inf_{m\in \AllMeasures{F}}
  \max_{t\in F}
  \int_0^\infty \Phi_m(t,\alpha)\,\ud\alpha.
  \end{equation}
  The conclusion follows from Liu's data processing estimate.  
  Specifically, for $a > 1, p \in (0, 1)$ define: 
  \[
  c_1(a) = \frac{\sqrt{a^2 - 1}}{a^2}, \quad 
  c_2(a) = \sqrt{\frac{a^2}{a^2 - 1} \, h\Big(\frac{1}{a^2}\Big)}, 
  \quad \mbox{and} \quad 
  h(p) = -p \log p - (1-p) \log (1-p).
  \]
  Then from \cite[Corollary~1, eqn. (83---87)]{Liu26}, it holds for any 
  $m \in \AllMeasures{F}$ and $t \in F$ that 
  \[
  \int_0^\infty \Phi_m(t,\alpha)\,\ud \alpha
  \geq
  2 \, \sup_{a > 1} c_1(a) \Big\{ I_m(t) - c_2(a)\diam(F)\Big\}, 
  \]
  Moreover, it holds that $\Gamma(F)\gtrsim \diam(F)$. 
  Indeed, pick $s, t \in F$ such that
  $\|s-t\|_2=\diam(F)$. For $r \in (0,\tfrac{\diam(F)}{2})$, 
  we have $B(s, r) \cap B(t, r) = \emptyset$. Consequently,
  let $p(r) = m(B(s, r))$ and $q(r) = m(B(t, r))$.
  We have $q(r) \leq 1 - p(r)$. Hence, 
  \begin{multline*}
  2\max\{I_m(s), I_m(t)\}  \geq  \int_0^{\tfrac{\diam(F)}{2}} 
  \sqrt{\log\frac{1}{p(r)}} + \sqrt{\log\frac{1}{q(r)}} \, \ud r 
  \\ \geq 
   \Big(\inf_{p \in [0, 1]} \sqrt{\log \frac{1}{p}} + \sqrt{\log\frac{1}{1-p}} \Big)
  \, \frac{\diam(F)}{2}
  \geq \sqrt{\log 2} \,  \diam(F),
  \end{multline*}
  as the infimum is attained at $p = \tfrac{1}{2}$.
  Consequently, by definition $\diam(F) \leq \frac{2}{\sqrt{\log 2}} \Gamma(F)$.
  Combining the previous two displays with~\cref{eqn:liu-functional-controls-gamma}, and taking $a = 10$, yields:
  \[
  \LiuFunctional{F} 
  \geq
  \half \, \sup_{a > 1} c_1(a) \Big(1 - c_2(a) \frac{2}{\sqrt{\log  2}}\Big) 
  \Gamma(F)
  > \frac{17}{800}\,  \Gamma(F). \qedhere
\]
\end{proof}

\bibliographystyle{abbrv}
\bibliography{references}
\end{document}